
\documentclass[10pt, english]{elsarticle}
\usepackage{amsthm}
\usepackage{amsmath}
\usepackage{latexsym, amssymb}
\usepackage{txfonts}
\usepackage{mathtools}
\usepackage{color}
\usepackage{babel}
\usepackage[all]{xy}
\usepackage[capitalise]{cleveref}

\newtheorem{thm}{Theorem}[section] 

\newtheorem{cor}[thm]{Corollary}

\newtheorem{lem}[thm]{Lemma}
\newtheorem{prop}[thm]{Proposition}

\theoremstyle{definition}

\newcommand\operA[2]{{\if!#2!\operatorname{#1}\else{\operatorname{#1}_{#2}^{\phantom{I}}}\fi}} 

%
%
%
%
%
%
%
%

\def\dim{{\operatorname{dim}}}


\newcommand{\Trace}[1][]{\if!#1!\operatorname{Tr}\else{\operatorname{Tr}_{#1}^{\phantom{I}}}\fi} 

\long\def\forget#1\forgotten{{}} %

\def\({\left(}
\def\){\right)}


\newcommand\LAY[3][]{{\begin{array}{c}\mbox{#2} \if#1!{}\else{+}\fi \\ \mbox{#3}\end{array}}}

\makeatletter
\newcommand{\bigperp}{%
  \mathop{\mathpalette\bigp@rp\relax}%
  \displaylimits
}

\newcommand{\bigp@rp}[2]{%
  \vcenter{
    \m@th\hbox{\scalebox{\ifx#1\displaystyle2.1\else1.5\fi}{$#1\perp$}}
  }%
}
\makeatother

\renewcommand{\geq}{\geqslant}

\makeatletter
\def\ps@pprintTitle{%
 \let\@oddhead\@empty
 \let\@evenhead\@empty
 \def\@oddfoot{\centerline{\thepage}}%
 \let\@evenfoot\@oddfoot}
\makeatother

\newif\iffurther
\furtherfalse

\input xy
\input xyidioms.tex
\usepackage{xy}
\xyoption{all} %


\begin{document}
\begin{frontmatter}

\title{Field of Iterated Laurent Series and its Brauer Group}

\author{Adam Chapman}
\ead{adam1chapman@yahoo.com}
\address{Department of Computer Science, Tel-Hai Academic College, Upper Galilee, 12208 Israel}

\begin{abstract}
The symbol length of ${_pBr}(k(\!(\alpha_1)\!)\dots(\!(\alpha_n)\!))$ for an algebraically closed field $k$ of $\operatorname{char}(k) \neq p$ is known to be $\lfloor \frac{n}{2}  \rfloor$. We prove that the symbol length for the case of $\operatorname{char}(k) = p$ is rather $n-1$. We also show that pairs of anisotropic quadratic or bilinear $n$-fold Pfister forms over this field need not share an $(n-1)$-fold factor.
 \end{abstract}

\begin{keyword}
Brauer Group; Symbol Algebras; Symbol Length; Valuation
\MSC[2010] 16K20, 16W60, 11E81, 11E04
\end{keyword}
\end{frontmatter}

\section{Introduction}

Given a prime integer $p$ and a field $F$ of $\operatorname{char}(F) \neq p$ containing a primitive root of unity $\rho$, every cyclic algebra of degree $p$ over $F$ takes the form
$$(\alpha,\beta)_{p,F}=F \langle x,y : x^p=\alpha, y^p=\beta, yx=\rho xy \rangle$$
for some $\alpha,\beta \in F^\times$.
When $\operatorname{char}(F)=p$, every such an algebra takes the form
$$[\alpha,\beta)_{p,F}=F \langle x,y : x^p-x=\alpha, y^p=\beta, y x y^{-1}=x+1 \rangle$$
for some $\alpha \in F$ and $\beta \in F^\times$.
These forms are called ``(Hilbert) symbol presentations" of the algebras, and the algebras are also called ``symbol algebras".
By \cite{MS} (when $\operatorname{char}(F) \neq p$ and $\rho \in F$) and \cite[Chapter 7, Theorem 30]{Albert:1968} (when $\operatorname{char}(F)=p$), every class in $\prescript{}{p}Br(F)$ is represented by a a tensor product of symbol algebras of degree $p$.
The symbol length of a class is the minimal number of symbol algebras required in order to express it, and the symbol length of ${_pBr}(F)$ is the supremum on the symbol lengths of its classes.
Fields $F$ for which $\prescript{}{2}Br(F)$ has symbol length 1 are called ``linked fields" and are of interest in number theory because of their special arithmetic properties (for example, their $u$-invariant is either 0,1,2,4 or 8 by \cite{ElmanLam:1973} and \cite{ChapmanDolphin:2017}).

For complete discretely valued fields, the computation of the symbol length is often easy. In particular, the symbol length of $\prescript{}{p}Br(F)$ is $\lfloor \frac{n}{2} \rfloor$ for $F=k(\!(\alpha_1)\!)\dots(\!(\alpha_n)\!)$ being the field of iterated Laurent series in $n$ indeterminates over an algebraically closed field $k$ of $\operatorname{char}(k)\neq p$.
The goal of this paper is to show that this does not extend to the case of $\operatorname{char}(k)=p$, and in fact the symbol length of $\prescript{}{p}Br(F)$ is exactly $n-1$. 
We also study pairs of quadratic (or bilinear) Pfister forms over such fields when $p=2$ and show that they need not share an $(n-1)$-fold factor.

\section{Notation and Terminology}

We denote the symbol length of ${_pBr}(F)$ by $\operatorname{SymL}_p(F)$.
Given a valued division algebra $D$ (that can be a field, in particular), we denote its residue field by $\overline{D}$ and its value group by $\Gamma_D$. We refer the reader to \cite{TignolWadsworth:2015} for background on valuation theory on division algebras.

Given a field $F$ of $\operatorname{char}(F)=p>0$, if $[F:F^p]<\infty$ then $[F:F^p]=p^r$ for some nonnegative integer $r$. This $r$ is called the ``$p$-rank" of $F$, and denoted by $\operatorname{rank}_p(F)$.
We say that $I_q^n F$ is linked if every two quadratic $n$-fold Pfister forms over $F$ share an $(n-1)$-fold Pfister factor. In particular, $F$ is linked when $I_q^2 F$ is linked. We say that $I^n F$ is $m$-linked when every $m$ anisotropic bilinear $n$-fold Pfister forms over $F$ share an $(n-1)$-fold factor. When $m=2$ we simply say $I^n F$ is linked. When $\operatorname{char}(F)\neq 2$ there is no distinction between $I_q^n F$ and $I^n F$.
For background on quadratic and symmetric bilinear forms see \cite{EKM}.

\section{The Brauer Group}

For the standard case of $\operatorname{char}(k) \neq p$ the following is known:

\begin{prop}
Let $p$ be a prime integer and $k$ an algebraically closed field of $\operatorname{char}(k) \neq p$. Then $\operatorname{SymL}_p(F)=\lfloor \frac{n}{2} \rfloor$ for $F=k(\!(\alpha_1)\!) \dots (\!(\alpha_n)\!)$ being the field of iterated Laurent series in $n$ indeterminates.
\end{prop}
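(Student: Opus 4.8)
The plan is to prove this by induction on $n$, exploiting the structure of iterated Laurent series fields as complete discretely valued fields with a well-understood residue theory for the $p$-torsion Brauer group. The base cases $n = 0, 1$ are immediate: $k$ is algebraically closed so $\prescript{}{p}\Br(k) = 0$, and $\prescript{}{p}\Br(k(\!(\alpha_1)\!)) = 0$ as well since the residue field is $k$ and the value group is $\Z$, which is $p$-divisible-free but $\Z/p\Z$ has no nontrivial pairings into a trivial residue Brauer group; so $\operatorname{SymL}_p = 0 = \lfloor 1/2 \rfloor$. For the inductive step, I would set $F = k(\!(\alpha_1)\!)\dots(\!(\alpha_n)\!)$, view it as $E(\!(\alpha_n)\!)$ where $E = k(\!(\alpha_1)\!)\dots(\!(\alpha_{n-1})\!)$, and use the standard split exact sequence (valid because $\chr(k) \neq p$, so the residue characteristic is prime to $p$)
$$
0 \lra \prescript{}{p}\Br(E) \lra \prescript{}{p}\Br(F) \lra \operatorname{Hom}(\Gamma_F/\Gamma_E, \prescript{}{p}\Br(E)[\text{tors}])\ \text{-ish, i.e.}\ \prescript{}{p}H^1(E) \lra 0,
$$
more precisely the decomposition $\prescript{}{p}\Br(E(\!(\alpha_n)\!)) \cong \prescript{}{p}\Br(E) \oplus \prescript{}{p}H^1(E, \mu_p)$ coming from the cohomology of the absolute Galois group of a Henselian discretely valued field.

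The key computational steps are then: first, by the Milnor-type description, a class in $\prescript{}{p}\Br(F)$ is a sum of a class pulled back from $E$ and a class of the form $(u, \alpha_n)_{p}$ for $u \in E^\times/(E^\times)^p$; second, use that $\prescript{}{p}H^1(E,\mu_p) = E^\times/(E^\times)^p$ is itself generated, as a group, by the classes of $\alpha_1, \dots, \alpha_{n-1}$ together with $E^\times$-units whose residues generate $k^\times/(k^\times)^p = 0$, so in fact $E^\times/(E^\times)^p \cong (\Z/p\Z)^{n-1}$ with basis $\alpha_1, \dots, \alpha_{n-1}$; third, deduce that every class in $\prescript{}{p}\Br(F)$ decomposes as a sum of a class from $\prescript{}{p}\Br(E)$ (which by induction needs at most $\lfloor (n-1)/2 \rfloor$ symbols) plus at most one symbol $(u,\alpha_n)_p$. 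This gives the upper bound $\operatorname{SymL}_p(F) \le \lfloor (n-1)/2 \rfloor + 1$, which equals $\lfloor (n+1)/2 \rfloor$ — so I must be more careful: pairing up the residue contribution. The correct bookkeeping is that going from $E$ to $F = E(\!(\alpha_n)\!)(\!(\alpha_{n+1})\!)$ — i.e. adding \emph{two} variables at a time — increases the symbol length by exactly $1$: the new part of the Brauer group is generated by symbols $(\alpha_i, \alpha_n)$, $(\alpha_i,\alpha_{n+1})$, $(\alpha_n,\alpha_{n+1})$, and a Merkurjev-style / Tignol-style argument shows any such combination reduces to a single symbol plus a class from $\prescript{}{p}\Br(E(\!(\alpha_n)\!))$. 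I would quote this standard reduction (it is essentially the statement that over such fields every $p$-algebra of exponent $p$ "compresses"), and combine with induction.

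For the matching lower bound I would exhibit an explicit class requiring $\lfloor n/2 \rfloor$ symbols, namely $\sum_{i=1}^{\lfloor n/2 \rfloor} (\alpha_{2i-1}, \alpha_{2i})_p$, and show its symbol length is not smaller. The standard tool here is the computation of the index: this tensor product has index $p^{\lfloor n/2 \rfloor}$ because the $\alpha_i$ form an independent family of "ramification parameters" (one computes the index via the iterated residue maps, each symbol $(\alpha_{2i-1},\alpha_{2i})$ contributing a ramification in two independent directions), whereas a sum of $m$ symbols has index at most $p^m$. Hence $m \ge \lfloor n/2 \rfloor$. I would phrase this using the value-group/residue machinery of \cite{TignolWadsworth:2015}: the underlying division algebra is totally ramified with $\Gamma_D/\Gamma_F \cong (\Z/p\Z)^{2\lfloor n/2\rfloor}$, giving index $p^{\lfloor n/2\rfloor}$.

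The main obstacle I anticipate is the upper bound's "compression" step — showing that the residue contribution, which a priori could involve up to $n-1$ independent symbols of the form $(\alpha_i, \alpha_n)$, can always be folded into a single symbol modulo classes coming from $E$. This requires either citing the known symbol-length result for Henselian fields (the behavior of $\operatorname{SymL}_p$ under Laurent series extensions when the residue characteristic is prime to $p$, cf. the general principle that $\operatorname{SymL}_p(E(\!(t)\!)) \le \operatorname{SymL}_p(E) + \operatorname{SymL}_p(E) + 1$ is too weak) or a direct manipulation using that $E^\times/(E^\times)^p$ has rank exactly $n-1$ and the antisymmetry and bilinearity of the symbol, plus the Bass–Tate / Merkurjev simplification that a sum of symbols $\sum (u_i, \alpha_n)$ equals $(\prod u_i, \alpha_n)$ when the second slot is fixed. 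That last identity is in fact clean and makes the whole residue part a single symbol; the remaining subtlety is only that classes like $(\alpha_i,\alpha_j)$ with $i,j < n$ live in $\prescript{}{p}\Br(E)$ and are handled by induction, so the clean statement $\operatorname{SymL}_p(F) = \operatorname{SymL}_p(E) + (\text{0 or 1 depending on parity})$ falls out, completing the induction with the base already verified.
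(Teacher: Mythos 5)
Your argument is a genuinely self-contained one (the paper simply cites \cite[Theorem 7.80]{TignolWadsworth:2015}), and the lower bound is fine: $\bigotimes_{i=1}^{\lfloor n/2\rfloor}(\alpha_{2i-1},\alpha_{2i})_p$ is totally ramified with $\Gamma_D/\Gamma_F\cong(\Z/p\Z)^{2\lfloor n/2\rfloor}$, hence has index $p^{\lfloor n/2\rfloor}$, while $m$ symbols can only produce index at most $p^m$. The residue-theoretic setup for the upper bound is also correct: since $\chr(k)\neq p$ and $k$ is algebraically closed, Hensel's lemma makes every unit a $p$-th power, so $F^\times/(F^\times)^p\cong(\Z/p\Z)^n$ with basis $\alpha_1,\dots,\alpha_n$, and Witt's decomposition applies at each stage.

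The gap is in the upper bound for odd $n$. The only compression mechanism you actually invoke is multiplicativity in a fixed slot, $\sum_i(u_i,\alpha_n)_p=(\prod_i u_i,\alpha_n)_p$, and this can never yield more than $\operatorname{SymL}_p(F_n)\leq\operatorname{SymL}_p(F_{n-1})+1$; for $n$ odd this gives $(n-1)/2+1$, one too many. Your two-variables-at-a-time variant does not repair this: it proves the even case \emph{assuming} the preceding odd case, so the odd cases are never established. Already for $n=3$ your scheme only shows every class is a sum of two symbols, whereas the claim is that it is a single symbol; e.g.\ $(\alpha_1,\alpha_2)_p+(\alpha_1^a\alpha_2^b,\alpha_3)_p$ must be rewritten as the single symbol $(\alpha_1\alpha_3^{-b},\,\alpha_2\alpha_3^{a})_p$, which requires changing \emph{both} slots simultaneously, not just collapsing a fixed second slot. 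The missing idea is to identify ${}_p\Br(F)$ with $\bigwedge^2\bigl(F^\times/(F^\times)^p\bigr)\cong\bigwedge^2(\Z/p\Z)^n$, under which a sum of $m$ symbols is a sum of $m$ decomposable elements, i.e.\ an alternating form of rank at most $2m$; conversely a symplectic basis writes any alternating form of rank $2r$ as $r$ decomposables, and the rank is bounded by $2\lfloor n/2\rfloor$ because alternating forms on an odd-dimensional $\FF_p$-space are degenerate. With that global symplectic reduction in place the induction becomes unnecessary; without it, the stated bookkeeping $\operatorname{SymL}_p(F)=\operatorname{SymL}_p(E)+(0\text{ or }1)$ does not ``fall out'' in the parity-$0$ case.
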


\begin{proof}
It follows from \cite[Theorem 7.80]{TignolWadsworth:2015}.
\end{proof}

We recall two theorems from the literature that are useful for our cause:
\begin{thm}[{\cite[Theorem 1]{Morandi:1989}}]\label{Morandi}
Suppose $F$ is a Henselian valued field, $D$ and $E$ are division algebras over $F$ such that 
\begin{enumerate}
\item $D$ is defectless,
\item $\overline{D} \otimes \overline{E}$ is a division algebra, and
\item $\Gamma_D \cap \Gamma_E=\Gamma_F$.
\end{enumerate}
Then $D \otimes E$ is a division algebra.
\end{thm}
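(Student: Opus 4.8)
The plan is to pass to associated graded rings, reducing the claim to the fact that a central simple $F$-algebra carrying a value function with no homogeneous zero divisors in its graded ring must be a division algebra. Since $F$ is Henselian, the valuation $v$ extends uniquely to valuations $v_D$ on $D$ and $v_E$ on $E$; write $\mathrm{gr}(F)$, $\mathrm{gr}(D)$, $\mathrm{gr}(E)$ for the associated graded rings. These are graded-division rings over the graded field $\mathrm{gr}(F)$, with degree-$0$ components $\overline D$ and $\overline E$ and homogeneous supports $\Gamma_D$ and $\Gamma_E$; the hypothesis that $D$ is defectless says precisely that $[\mathrm{gr}(D):\mathrm{gr}(F)]=[D:F]$.

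The first step is to equip $A:=D\otimes_F E$ with a value function $w$ extending $v$ whose leading-term map identifies $\mathrm{gr}_w(A)$ with $\mathcal A:=\mathrm{gr}(D)\otimes_{\mathrm{gr}(F)}\mathrm{gr}(E)$. This is the technical core and the only place hypothesis (1) is used: because $\mathrm{gr}(D)$ realizes the full $F$-dimension of $D$, one can transport $v_D$ along the left $D$-module structure of $A$ so that the resulting function is surmultiplicative and its leading terms multiply as in the graded tensor product. (Hypothesis (2) is already needed here, since without $\overline D\otimes_{\overline F}\overline E$ being reduced one could not obtain a genuine value function.) I expect this construction to be the main obstacle.

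Next I would verify that $\mathcal A$ is a graded-division ring. For a fixed degree $\epsilon$, the pairs $(\gamma,\delta)$ with $\gamma\in\Gamma_D$, $\delta\in\Gamma_E$ and $\gamma+\delta=\epsilon$ form, if nonempty, a single coset of $\Gamma_D\cap\Gamma_E$, which by hypothesis (3) equals $\Gamma_F$; rescaling by homogeneous units of $\mathrm{gr}(F)$ then identifies $\mathcal A_\epsilon$ with $\mathrm{gr}(D)_{\gamma_0}\otimes_{\overline F}\mathrm{gr}(E)_{\delta_0}$ for one such pair. In particular $\mathcal A_0=\overline D\otimes_{\overline F}\overline E$, which is a division ring by hypothesis (2), and each $\mathcal A_\epsilon$ with $\epsilon\in\Gamma_D+\Gamma_E$ is free of rank one over $\mathcal A_0$ on either side, spanned by a product $u\otimes u'$ of homogeneous units of $\mathrm{gr}(D)$ and $\mathrm{gr}(E)$, which is itself a unit of $\mathcal A$. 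Hence every nonzero homogeneous element of $\mathcal A$ is invertible, so $\mathcal A$ is graded-division and in particular has no homogeneous zero divisors.

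Finally I would conclude. If $ab=0$ in $A$ with $a,b\neq 0$, then $w(a)$ and $w(b)$ are finite, so $a$ and $b$ have nonzero homogeneous leading terms $\overline a,\overline b\in\mathrm{gr}_w(A)\cong\mathcal A$; surmultiplicativity of $w$ together with $w(ab)=\infty$ forces $\overline a\,\overline b=0$ in $\mathcal A$, contradicting the previous step. Thus $A$ has no zero divisors, and being finite-dimensional over $F$ it is a division algebra.
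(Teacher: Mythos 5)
This statement is quoted in the paper from Morandi's article and is given no proof there, so there is nothing internal to compare against; I will judge your argument on its own terms. What you have written is, in outline, the standard modern proof of Morandi's theorem via associated graded rings, essentially as it appears in Tignol--Wadsworth's book (the paper's reference for valuation theory on division algebras); Morandi's original argument constructs a valuation on $D\otimes_F E$ more directly, but the two are close in spirit. Your second and third steps are correct and complete: the coset argument using $\Gamma_D\cap\Gamma_E=\Gamma_F$ does identify each homogeneous component of $\mathrm{gr}(D)\otimes_{\mathrm{gr}(F)}\mathrm{gr}(E)$ with a rank-one module over $\mathcal A_0=\overline D\otimes_{\overline F}\overline E$ generated by a homogeneous unit, so hypothesis (2) makes the graded tensor product a graded division ring, and the leading-term argument then rules out zero divisors in $D\otimes_F E$.

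The one substantive gap is the step you yourself flag: you assert, but do not prove, that $D\otimes_F E$ carries a surmultiplicative value function $w$ extending $v$ with $\mathrm{gr}_w(D\otimes_F E)\cong\mathrm{gr}(D)\otimes_{\mathrm{gr}(F)}\mathrm{gr}(E)$ \emph{as graded rings}. Since this is the crux, it needs the actual mechanism: defectlessness of $D$ over the Henselian field $F$ forces $[\mathrm{gr}(D):\mathrm{gr}(F)]=[D:F]$, which is exactly the criterion for $v_D$ to be a $v$-norm on $D$ as an $F$-vector space; one then picks a splitting basis $d_1,\dots,d_m$ of $D$ and sets $w\bigl(\sum_i d_i\otimes e_i\bigr)=\min_i\bigl(v_D(d_i)+v_E(e_i)\bigr)$, after which surmultiplicativity and the compatibility of leading terms with multiplication are concrete (if slightly tedious) verifications. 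A smaller point: your parenthetical claim that hypothesis (2) is ``already needed'' to obtain the value function is misplaced. The surmultiplicative norm $w$ exists as soon as $D$ is defectless, with no condition on $\overline D\otimes_{\overline F}\overline E$; hypotheses (2) and (3) enter only afterwards, to show that the graded ring is a graded division ring (equivalently, that $w$ is in fact a valuation). This misattribution does not damage the proof, but tightening it would make clear that the logical structure is: (1) gives the norm and the graded tensor-product description, (2) and (3) give graded-division, and graded-division gives division.
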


\begin{thm}[{\cite[Theorem 3.3]{AravireJacob:1992}}]\label{Arav}
Let $F$ be a maximally complete field of $\operatorname{char}(F)=p>0$ with $\dim_{\mathbb{F}_p}(\Gamma_F/p\Gamma_F)=n$. Write $\dim_{\mathbb{F}_p}(\overline{F}/\wp(\overline{F}))=m$. Then the symbol length of ${_pBr}(F)$ is at most $n-1$ when $m<n$ and $n$ when $m \geq n$.
\end{thm}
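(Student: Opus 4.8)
The plan is to take an arbitrary class of ${_p\Br}(F)$, written by \cite[Chapter 7, Theorem 30]{Albert:1968} as a sum $\eta=\sum_{i=1}^{r}[\alpha_i,\beta_i)$ of symbol algebras, and to push it into a normal form that visibly uses at most $n$ symbols, and at most $n-1$ when $m<n$, by repeatedly exploiting that $F$ is maximally complete. Throughout I fix elements $\pi_1,\dots,\pi_n\in F^\times$ whose valuations form an $\mathbb F_p$-basis of $\Gamma_F/p\Gamma_F$, together with elements $a_1,\dots,a_m\in\mathcal O_F$ whose residues form an $\mathbb F_p$-basis of $\overline F/\wp(\overline F)$.

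First I normalise the second arguments: since the $v(\pi_j)$ generate $\Gamma_F/p\Gamma_F$, every $\beta\in F^\times$ is congruent modulo $F^{\times p}$ to $u\,\pi_1^{e_1}\cdots\pi_n^{e_n}$ with $u$ a unit, so applying $\mathbb F_p$-bilinearity of the symbol in its second slot to each $\beta_i$ and collecting the $\pi_j$-parts yields $\eta=\varrho+\sum_{j=1}^n[\gamma_j,\pi_j)$ with $\gamma_j\in F$ and $\varrho$ a sum of symbols whose second argument is a unit. Next I normalise the first arguments modulo $\wp(F)$: representing elements of $F$ as Hahn-type series with well-ordered support (legitimate precisely because $F$ is maximally complete) and using that $\wp(ct^\gamma)=c^pt^{p\gamma}-ct^\gamma$ has leading term $-ct^\gamma$ for $\gamma>0$, leading term $c^pt^{p\gamma}$ for $\gamma<0$, and equals $c^p-c$ on the residue for $\gamma=0$, a transfinite recursion reduces any first argument, modulo $\wp(F)$, to a sum $\delta^{-}+\sum_l c_l a_l$ with $\delta^{-}$ of negative, ``$\wp$-reduced'' support and $c_l\in\mathbb F_p$. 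By additivity in the first slot, $\varrho$ thereby decomposes into ``inertial'' symbols $[a_l,w)$ ($w$ a unit) and ``partially ramified'' symbols $[\delta^{-},w)$ (with $v(\delta^{-})<0$ and $w$ a unit).

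These two kinds I handle in turn. For $[a_l,w)$: Hensel's lemma gives $\mathfrak m_F\subseteq\wp(F)$, and $N_{L/F}(1+\mathfrak m_L)=1+\mathfrak m_F$ for an inertial degree-$p$ extension $L/F$ (its residue extension is separable, so the trace is onto and one lifts successively), whence $[a_l,w)$ depends only on residues and lies in the image of the inertial embedding ${_p\Br}(\overline F)\hookrightarrow{_p\Br}(F)$; and since $\operatorname{SymL}_p(\overline F)\le m$ (reduce first arguments against $a_1,\dots,a_m$ and use $[ca,b)=[a,b^{c})$), all the inertial symbols together represent the image of a single class of ${_p\Br}(\overline F)$, of symbol length $\le m$. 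For $[\delta^{-},w)$ with $v(\delta^{-})<0$: the extension $F(\wp^{-1}(\delta^{-}))/F$ is genuinely ramified --- or ferociously ramified, when a $p$-th root of a non-$p$-th-power residue is forced, which is exactly where the non-perfectness of $\overline F$ intervenes --- and comparing its value group and residue field with those attached to the $\pi_j$ lets one rewrite $[\delta^{-},w)$ inside the subgroup generated by $[\,\cdot\,,\pi_1),\dots,[\,\cdot\,,\pi_n)$. Merging all the new $\pi_j$-symbols into the old ones by bilinearity leaves $\eta=\varrho_0+\sum_{j=1}^n[\gamma_j',\pi_j)$ with $\varrho_0$ inertial, of symbol length $\le m$.

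The remaining step --- absorbing $\varrho_0$ into the $n$ ramified symbols, which is where the dichotomy $m\ge n$ versus $m<n$ enters --- is the one I expect to be the main obstacle. The mechanism is that a symbol $[\,\cdot\,,\pi_j)$ whose first argument has a pole can carry inertial information (already the identity $[\alpha,\alpha)=0$, which holds because an $F$-algebra of degree $p$ that is not a division algebra is split, gives $[\alpha,\beta)=[\alpha,\beta\alpha^{-1})$ and lets one trade value-group data against the first slot), and the crux is to realise $\varrho_0$ as $\sum_{j=1}^n[\mu_j,\pi_j)$ with every $v(\mu_j)<0$, whence $\eta=\sum_{j=1}^n[\gamma_j'+\mu_j,\pi_j)$ and $\operatorname{SymL}_p(F)\le n$; when $m<n$ the finite-dimensional inertial datum cannot be spread over all $n$ independent ramification directions without a forced linear dependence among the resulting $\mu_j$, so one of the $n$ summands is redundant and $\operatorname{SymL}_p(F)\le n-1$. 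Carrying this out rigorously calls for control of norms from wildly and ferociously ramified degree-$p$ extensions of a maximally complete base, together with a dimension count on the associated graded of ${_p\Br}(F)$ for the valuation filtration; it is exactly here that ``maximally complete'', and not merely ``Henselian'', is indispensable --- both for the transfinite reductions above and because the bound genuinely fails without it.
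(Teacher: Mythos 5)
First, a point of comparison: the paper offers no proof of this statement at all --- it is quoted verbatim from Aravire--Jacob \cite[Theorem 3.3]{AravireJacob:1992} --- so the only thing your proposal can be measured against is the original argument, not anything in this article.

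Your opening reductions are sound and are in the spirit of the source: fixing $\pi_1,\dots,\pi_n$ whose values give a basis of $\Gamma_F/p\Gamma_F$ and $a_1,\dots,a_m$ whose residues give a basis of $\overline F/\wp(\overline F)$, normalising second slots by bilinearity, and using the generalized-power-series description of a maximally complete field to reduce first slots modulo $\wp(F)$ to a negative-support part plus an $\mathbb F_p$-combination of the $a_l$. The genuine gap is that the proof stops exactly where the theorem begins. You acknowledge that absorbing the inertial class $\varrho_0$ into the $n$ ramified symbols ``is the one I expect to be the main obstacle,'' and you give no argument that $\varrho_0$ can be realised as $\sum_{j}[\mu_j,\pi_j)$ with $v(\mu_j)<0$; that assertion \emph{is} the theorem in the case $m\geq n$, not a remark to be appended to it. Moreover, the mechanism you propose for the improved bound $n-1$ when $m<n$ --- a forced linear dependence among the inertial contributions $\mu_j$ --- cannot be the right one: in the case this paper actually uses, $\overline F$ is algebraically closed, so $m=0$ and there is no inertial datum whatsoever, yet the class $\sum_{j=1}^n[\gamma_j',\pi_j)$ produced by your reduction must still be compressed to $n-1$ symbols. (Already for $n=1$ this says ${}_p\operatorname{Br}(k(\!(\alpha)\!))=0$ for $k$ algebraically closed, which comes from a $C_1$-type norm argument, not from any dependence among inertial terms.) The saving of one symbol has to come from an interaction among the ramified symbols themselves, via identities such as $[ab,b)=[ab,a^{-1})$ together with control of the norm groups of wildly and ferociously ramified degree-$p$ extensions of a maximally complete base; this is precisely the part of Aravire--Jacob's proof that your sketch does not supply.
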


We are now ready to prove the main results:

\begin{lem}\label{Div}
Let $p$ be a prime integer and $k$ a field of $\operatorname{char}(k)=p$. Then $[\alpha_2^{-1},\alpha_1)_{p,F} \otimes [\alpha_3^{-1},\alpha_2)_{p,F} \otimes \dots \otimes [\alpha_n^{-1},\alpha_{n-1})_{p,F}$ is a division algebra over $F=k(\!(\alpha_1)\!) \dots (\!(\alpha_n)\!)$.
\end{lem}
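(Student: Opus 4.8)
The plan is to prove the division-algebra property by induction on $n$, peeling off the "outermost" Laurent variable $\alpha_n$ at each stage and invoking \Tref{Morandi} with the $(\alpha_n)$-adic valuation on $F = k(\!(\alpha_1)\!)\dots(\!(\alpha_n)\!)$. Write $F_0 = k(\!(\alpha_1)\!)\dots(\!(\alpha_{n-1})\!)$, so that $F = F_0(\!(\alpha_n)\!)$ is Henselian (indeed complete) discretely valued with residue field $\overline{F} = F_0$ and value group $\Gamma_F = \Z$ (generated by $v(\alpha_n)=1$), while $\alpha_1,\dots,\alpha_{n-1}$ are units with residues $\alpha_1,\dots,\alpha_{n-1} \in F_0$. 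Set
$$D = [\alpha_3^{-1},\alpha_2)_{p,F} \otimes \dots \otimes [\alpha_n^{-1},\alpha_{n-1})_{p,F}, \qquad E = [\alpha_2^{-1},\alpha_1)_{p,F},$$
so the algebra in question is $D \otimes E$ (I am free to reorder the tensor factors). First I would identify $E$ as the "unramified" part: since $\alpha_2^{-1}$ and $\alpha_1$ are both $v$-units, $[\alpha_2^{-1},\alpha_1)_{p,F}$ is inertial over $F$, so it is defectless, $\Gamma_E = \Gamma_F = \Z$, and $\overline{E} = [\alpha_2^{-1},\alpha_1)_{p,F_0}$. Meanwhile $D$ is built from one symbol that is ramified at $v$, namely $[\alpha_n^{-1},\alpha_{n-1})_{p,F}$ (here $\alpha_n^{-1}$ has $v$-value $-1$), together with inertial symbols; a Springer/parameter computation shows $\Gamma_D = \frac1p\Z$ and $\overline{D} = [\alpha_3^{-1},\alpha_2)_{p,F_0} \otimes \dots \otimes [\alpha_{n-1}^{-1},\alpha_{n-2})_{p,F_0}$, which by the induction hypothesis (applied over $F_0$, with $n-1$ in place of $n$) is a division algebra of degree $p^{n-2}$.

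With these residue and value-group computations in hand, the three hypotheses of \Tref{Morandi} become: (1) $E$ is defectless — clear since $E$ is inertial; (2) $\overline{E} \otimes \overline{D}$ is a division algebra; (3) $\Gamma_E \cap \Gamma_D = \Gamma_F$, i.e. $\Z \cap \frac1p\Z = \Z$, which is immediate. So the crux is (2): I must show that
$$[\alpha_2^{-1},\alpha_1)_{p,F_0} \otimes [\alpha_3^{-1},\alpha_2)_{p,F_0} \otimes \dots \otimes [\alpha_{n-1}^{-1},\alpha_{n-2})_{p,F_0}$$
is a division algebra over $F_0 = k(\!(\alpha_1)\!)\dots(\!(\alpha_{n-1})\!)$ — but this is exactly the statement of the lemma with $n$ replaced by $n-1$, so it is the induction hypothesis. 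The base case is $n=2$: there $D\otimes E$ is the single symbol algebra $[\alpha_2^{-1},\alpha_1)_{p,F}$ with $F = k(\!(\alpha_1)\!)(\!(\alpha_2)\!)$, and this is a division algebra because $\alpha_1$ is a nonzero residue while $\alpha_2$ is a uniformizer for a valuation under which the symbol is "totally ramified" (or, alternatively, because over $k(\!(\alpha_1)\!)$ the element $\alpha_1$ is not a norm / the Artin–Schreier extension $x^p-x=\alpha_2^{-1}$ restricts appropriately — the one-symbol case is standard). One should also check $n=1$ trivially if the empty tensor product convention is used, but $n\geq 2$ suffices for the paper's purposes.

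The main obstacle, and the step deserving real care, is the residue computation for $D$ — specifically verifying that the single ramified symbol $[\alpha_n^{-1},\alpha_{n-1})_{p,F}$ contributes exactly a factor of $\frac1p\Z$ to the value group and nothing to the residue division algebra beyond scalars, while each of the remaining symbols $[\alpha_{j+1}^{-1},\alpha_j)_{p,F}$ for $j \leq n-2$ is genuinely inertial with residue $[\alpha_{j+1}^{-1},\alpha_j)_{p,F_0}$, and that these pieces combine (via the gauge/value-function formalism of \cite{TignolWadsworth:2015}, or Morandi's theorem applied internally) so that $\overline{D}$ is the full tensor product and $D$ is defectless with the asserted index $p^{n-1}$. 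Concretely: in $[\alpha_n^{-1},\alpha_{n-1})_{p,F}$ the generator $y$ with $y^p = \alpha_{n-1}$ is a unit whose residue generates a degree-$p$ extension issue, whereas the Artin–Schreier generator $x$ with $x^p - x = \alpha_n^{-1}$ has $v(x) = -1/p < 0$, so $v$ extends with ramification index $p$ on this symbol; one checks the residue algebra collapses to $\overline{F}$ here. Once the value-group intersection $\Z \cap \frac1p\Z = \Z$ and the division property of $\overline{D}\otimes\overline{E}$ are in place, \Tref{Morandi} delivers the conclusion and the induction closes.
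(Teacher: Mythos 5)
Your overall strategy (induction on $n$, the $\alpha_n$-adic valuation on $F=F_0(\!(\alpha_n)\!)$, and Theorem~\ref{Morandi}) is the same as the paper's, but your key residue computation is wrong --- and it is precisely the step you yourself flagged as ``deserving real care''. The symbol $[\alpha_n^{-1},\alpha_{n-1})_{p,F}$ is \emph{not} totally ramified at the $\alpha_n$-adic valuation, and its residue algebra does not ``collapse to $\overline F$'': the generator $y$ with $y^p=\alpha_{n-1}$ is a unit whose residue $q$ satisfies $q^p=\alpha_{n-1}$, and $\alpha_{n-1}$ is not a $p$-th power in $F_0$, so the residue contains the degree-$p$ field $F_0[q]=F_0(\alpha_{n-1}^{1/p})$. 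Combined with $v(x)=-1/p$ this forces $e=f=p$: the symbol is nicely semiramified, with value group $\frac1p\Z$ and residue \emph{field} $F_0[q]=k(\!(\alpha_1)\!)\dots(\!(\alpha_{n-2})\!)(\!(q)\!)$. Consequently your claimed $\overline D=[\alpha_3^{-1},\alpha_2)_{p,F_0}\otimes\dots\otimes[\alpha_{n-1}^{-1},\alpha_{n-2})_{p,F_0}$ fails already on dimension grounds: it would give $|\Gamma_D:\Gamma_F|\cdot\dim_{F_0}\overline D=p\cdot p^{2(n-3)}=p^{2n-5}<p^{2(n-2)}=\dim_F D$, contradicting the defectlessness you assert in the same breath.

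The correct residue of your $D$ (whose division property you also need to supply --- it is an instance of the induction hypothesis over the base field $k(\!(\alpha_1)\!)$, so that part is salvageable) is the chain $[\alpha_3^{-1},\alpha_2)\otimes\dots\otimes[\alpha_{n-2}^{-1},\alpha_{n-3})\otimes[q^{-1},\alpha_{n-2})$ over $F_0[q]$, where the last inertial symbol transforms because $\alpha_{n-1}^{-1}=q^{-p}$ and $[q^{-p},\alpha_{n-2})\sim[q^{-1},\alpha_{n-2})$ in characteristic $p$. Hence the algebra $\overline D\otimes\overline E$ you must show is division is \emph{not} the $(n-1)$-case of the lemma over $F_0$; it is the $(n-1)$-case over the different iterated Laurent field $k(\!(\alpha_1)\!)\dots(\!(\alpha_{n-2})\!)(\!(q)\!)$ --- which is exactly why the lemma has to be stated and proved for an arbitrary base field $k$ of characteristic $p$, and why the paper applies the induction hypothesis over that field. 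The paper also avoids the delicate residue computation for a mixed inertial/ramified tensor product by isolating the single ramified symbol as $E$ (its residue field $K[q:q^p=\alpha_{n-1}]$ is computed directly) and taking $D$ to be the purely inertial part. As written, your verification of condition (2) of Theorem~\ref{Morandi} is incorrect, though the argument does close once the residue is corrected along these lines.
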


\begin{proof}
We prove this by induction.
For $n=2$, the algebra $[\alpha_2^{-1},\alpha_1)_{p,F}$ is indeed a division algebra:
The \'etale extension $K=F[x : x^p-x=\alpha_2^{-1}]$ is a field extension because the value of $\alpha_2^{-1}$ with respect to the right-to-left $(\alpha_1,\alpha_2)$-adic valuation is $(0,-1)$, and so if a solution to the equation $z^p-z=\alpha_2^{-1}$ existed in $F$, the root $z$ would be of value $(0,-\frac{1}{p})$, but $\Gamma_F=\mathbb{Z} \times \mathbb{Z}$, contradiction.
Now, the valuation extends to the field $K$ and $\Gamma_K=\mathbb{Z} \times \frac{1}{p} \mathbb{Z}$.
If $\alpha_1$ were a norm in $K/F$, since its value is $(1,0)$, there would exist an element in $K$ of value $(\frac{1}{p},0)$, but there is no such element. Hence $[\alpha_2^{-1},\alpha_1)_{p,F}$ is a division algebra.

Assume that $[\alpha_2^{-1},\alpha_1)_{p,F} \otimes [\alpha_3^{-1},\alpha_2)_{p,F} \otimes \dots \otimes [\alpha_m^{-1},\alpha_{m-1})_{p,F}$ is a division algebra for every $m \in \{2,\dots,n-1\}$ and any field $F$ of iterated Laurent series in $m$ indeterminates over any field $k$ of $\operatorname{char}(k)=p$.
Let us now look at $[\alpha_2^{-1},\alpha_1)_{p,F} \otimes [\alpha_3^{-1},\alpha_2)_{p,F} \otimes \dots \otimes [\alpha_n^{-1},\alpha_{n-1})_{p,F}$ over $F=k(\!(\alpha_1)\!) \dots (\!(\alpha_n)\!)$.
Write $F=K(\!(\alpha_n)\!)$ where $K=k(\!(\alpha_1)\!) \dots (\!(\alpha_{n-1})\!)$, $D=[\alpha_2^{-1},\alpha_1)_{p,F} \otimes [\alpha_3^{-1},\alpha_2)_{p,F} \otimes \dots \otimes [\alpha_{n-1}^{-1},\alpha_{n-2})_{p,F}$ and $E=[\alpha_n^{-1},\alpha_{n-1})_{p,F}$.
Consider the $\alpha_n$-adic valuation on $F$.
Then $\overline{E}=K[q : q^p=\alpha_{n-1}]=k(\!(\alpha_1)\!) \dots (\!(\alpha_{n-2})\!)(\!(q)\!)$.
In addition, the algebra $D$ is defectless with respect to this valuation, and we have $\overline{D} \otimes \overline{E}=[\alpha_2^{-1},\alpha_1)_{p,\overline{E}} \otimes [\alpha_3^{-1},\alpha_2)_{p,\overline{E}} \otimes \dots \otimes [\alpha_{n-2}^{-1},\alpha_{n-3})_{p,\overline{E}} \otimes [q^{-1},\alpha_{n-2})_{p,\overline{E}}$, which is a division algebra by the induction hypothesis.
The conditions of Theorem \ref{Morandi} are therefore satisfied, and so $D\otimes E$ is a division algebra over $F$, and the induction is complete.
\end{proof}

\begin{thm}
Let $p$ be a prime integer, $k$ a perfect field of $\operatorname{char}(k)=p$ and $F=k(\!(\alpha_1)\!) \dots (\!(\alpha_n)\!)$. Then $\operatorname{SymL}_p(F)=n-1$ when $\dim_{\mathbb{F}_p}(k/\wp(k)) < n$ (e.g., when $k$ is algebraically closed), and $\operatorname{SymL}_p(F)=n$ when $\dim_{\mathbb{F}_p}(k/\wp(k)) \geq n$.
\end{thm}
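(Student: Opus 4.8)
The plan is to obtain both equalities by sandwiching: the upper bounds come from \Tref{Arav}, while the lower bounds come from exhibiting suitable division algebras --- \Lref{Div} when $\dim_{\mathbb{F}_p}(k/\wp(k))<n$, and a new construction when $\dim_{\mathbb{F}_p}(k/\wp(k))\ge n$.

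First I would record that $F=k(\!(\alpha_1)\!)\dots(\!(\alpha_n)\!)$ carries the rank-$n$ valuation $v$ with value group $\Gamma_F=\mathbb{Z}^n$ (lexicographically ordered, $v(\alpha_i)$ the $i$-th standard generator) and residue field $\overline{F}=k$, and that $F$ is maximally complete, since $k(\!(t)\!)$ is and maximal completeness is inherited both by Laurent series over a maximally complete field and by composite valuations. Thus $\dim_{\mathbb{F}_p}(\Gamma_F/p\Gamma_F)=n$ and $\dim_{\mathbb{F}_p}(\overline{F}/\wp(\overline{F}))=\dim_{\mathbb{F}_p}(k/\wp(k))$, so \Tref{Arav} gives $\operatorname{SymL}_p(F)\le n-1$ in the first case and $\operatorname{SymL}_p(F)\le n$ in the second. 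For the matching lower bound in the first case, \Lref{Div} produces a division algebra that is a tensor product of $n-1$ degree-$p$ symbols, hence of index $p^{n-1}$; since every class of ${_pBr}(F)$ is a tensor product of degree-$p$ symbols and the index of a product of $j$ such symbols divides $p^j$, no presentation of that class uses fewer than $n-1$ symbols, so $\operatorname{SymL}_p(F)\ge n-1$ (this bound holds for every $n$).

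For the lower bound when $\dim_{\mathbb{F}_p}(k/\wp(k))\ge n$, I would pick $c_1,\dots,c_n\in k$ that are $\mathbb{F}_p$-independent modulo $\wp(k)$ and prove that $A=[c_1,\alpha_1)_{p,F}\otimes\dots\otimes[c_n,\alpha_n)_{p,F}$ is a division algebra; being a product of $n$ degree-$p$ symbols it then has index $p^n$, so $\operatorname{SymL}_p(F)\ge n$ and the theorem follows. I would argue by induction on $n$, uniformly over all perfect fields of characteristic $p$. For $n=1$, $X^p-X-c_1$ has no root in $k(\!(\alpha_1)\!)$ --- a root would be a $v$-unit and reduce to a root in $k$ --- so $F[\wp^{-1}(c_1)]/F$ is an unramified degree-$p$ field extension; since $v\bigl(N_{F[\wp^{-1}(c_1)]/F}(\xi)\bigr)\in p\mathbb{Z}$ for all $\xi$, the element $\alpha_1$ is not a norm and $[c_1,\alpha_1)_{p,F}$ is division. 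For the inductive step write $F=K(\!(\alpha_n)\!)$ with $K=k(\!(\alpha_1)\!)\dots(\!(\alpha_{n-1})\!)$, set $D=[c_1,\alpha_1)_{p,F}\otimes\dots\otimes[c_{n-1},\alpha_{n-1})_{p,F}$ and $E=[c_n,\alpha_n)_{p,F}$, and work with the $\alpha_n$-adic valuation. The parameters of $D$ are $\alpha_n$-adic units, so $D$ is inertial with $\Gamma_D=\Gamma_F$ and $\overline{D}=[c_1,\alpha_1)_{p,K}\otimes\dots\otimes[c_{n-1},\alpha_{n-1})_{p,K}$, which is division over $K$ by the induction hypothesis (case $n-1$ over $k$); hence $D$ is a defectless division algebra over $F$. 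The symbol $E$ is division over $F$ (the $n=1$ argument with $K$ in place of $k$, using $c_n\notin\wp(K)$), and is semiramified, with $\Gamma_E=\tfrac1p\mathbb{Z}$ and $\overline{E}=K[\wp^{-1}(c_n)]=k'(\!(\alpha_1)\!)\dots(\!(\alpha_{n-1})\!)$, where $k'=k[\wp^{-1}(c_n)]$ is again perfect. Then $\Gamma_D\cap\Gamma_E=\mathbb{Z}=\Gamma_F$, and $\overline{D}\otimes\overline{E}=[c_1,\alpha_1)_{p,\overline{E}}\otimes\dots\otimes[c_{n-1},\alpha_{n-1})_{p,\overline{E}}$ is division over $\overline{E}$ by the induction hypothesis (case $n-1$ over $k'$), provided $c_1,\dots,c_{n-1}$ stay $\mathbb{F}_p$-independent modulo $\wp(k')$. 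This last point is the crux: the standard description of Artin--Schreier extensions gives $\wp(k')\cap k=\wp(k)+\mathbb{F}_p c_n$, so any relation $\sum_{i=1}^{n-1}a_ic_i\in\wp(k')$ with $a_i\in\mathbb{F}_p$ forces $\sum_{i=1}^{n-1}a_ic_i+a_nc_n\in\wp(k)$ for a suitable $a_n$, which contradicts the choice of $c_1,\dots,c_n$ unless all $a_i$ vanish. Hence \Tref{Morandi} applies, $A=D\otimes E$ is a division algebra, and the induction is complete.

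I expect the inductive step above to be the main obstacle: correctly identifying the residue algebra $\overline{E}$ and the value group $\Gamma_E$ of the semiramified symbol $[c_n,\alpha_n)_{p,F}$, and --- most of all --- checking that $\mathbb{F}_p$-independence modulo the Artin--Schreier operator persists under the residual extension $k'/k$, which is exactly what verifies the three hypotheses of \Tref{Morandi}. The rest (the invariants and maximal completeness of $F$, and the index/symbol-length bookkeeping) is routine.
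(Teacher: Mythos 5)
Your proposal is correct, and it splits into two halves of different character. The first case ($\dim_{\mathbb{F}_p}(k/\wp(k))<n$) follows the paper exactly: upper bound from \Tref{Arav}, lower bound from \Lref{Div}; you add the (worth making explicit) index bookkeeping that a division algebra of index $p^{n-1}$ cannot be written with fewer than $n-1$ degree-$p$ symbols. For the second case your route is genuinely different. The paper proves that $[\beta_1,\alpha_1)\otimes\dots\otimes[\beta_n,\alpha_n)$ is division in one stroke, by recognizing it as the twisted iterated Laurent series division ring $L(\!(y_1;\sigma_1)\!)\dots(\!(y_n;\sigma_n)\!)$ over the field $L=k[x_1,\dots,x_n: x_i^p-x_i=\beta_i]$ --- the independence of the $\beta_i$ modulo $\wp(k)$ enters only once, to guarantee that $L$ is a field. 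You instead run a second induction through Morandi's theorem, parallel to the proof of \Lref{Div}, with $D$ inertial and $E=[c_n,\alpha_n)$ semiramified; the price is that you must track how independence modulo $\wp$ survives passage to the residue field $k'=k[\wp^{-1}(c_n)]$ of $E$, which you do correctly via the Artin--Schreier identity $\wp(k')\cap k=\wp(k)+\mathbb{F}_p c_n$. Your version is longer but self-contained given the tools already in the paper and makes the role of the independence hypothesis visible at every stage; the paper's version is shorter but leans on the structure theory of twisted Laurent series rings from \cite{TignolWadsworth:2015}. Both arguments are sound.
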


\begin{proof}
Suppose $\dim_{\mathbb{F}_p}(k/\wp(k)) < n$.
Consider the right-to-left $(\alpha_1,\dots,\alpha_n)$-adic valuation on $F$.
Since the residue field is algebraically closed, by Theorem \ref{Arav} the symbol length of ${_pBr}(F)$ is at most $n-1$.
On the other hand, by Lemma \ref{Div} there exists a class of symbol length $n-1$ in ${_pBr}(F)$, so the symbol length of ${_pBr}(F)$ is exactly $n-1$.

Now suppose $\dim_{\mathbb{F}_p}(k/\wp(k)) \geq n$.
By Theorem \ref{Arav} the symbol length of ${_pBr}(F)$ is at most $n$. Take $\beta_1,\dots,\beta_n \in k$ to be $\mathbb{F}_p$-linearly independent elements in $k/\wp(k)$, and consider the algebra $D=[\beta_1,\alpha_1)_{p,F} \otimes \dots \otimes [\beta_n,\alpha_n)_{p,F}$.
This algebra is generated over $F$ by $x_1,y_1,\dots,x_n,y_n$ satisfying $x_i^p-x_i=\beta_i$, $y_i^p=\alpha_i$ and $y_i x_i y_i^{-1}=x_i+1$, $y_i y_j=y_j y_i$ and $x_i x_j=x_j x_i$ for any $i,j \in \{1,\dots,n\}$ with $i\neq j$. Hence, this algebra can also be viewed as the skew field of twisted iterated Laurent series $L(\!(y_1;\sigma_1)\!)\dots(\!(y_n;\sigma_n)\!)$ over $L=k[x_1,\dots,x_n : x_i^p-x_i=\beta_i \forall i\in \{1,\dots,n\}]$ where each $\sigma_i$ is the automorphism of $L$ mapping $x_i$ to $x_i+1$ and every other $x_j$ to itself (see \cite[Section 1.1.3]{TignolWadsworth:2015}). In particular, $D$ is a division algebra whose symbol length is $n$, and so $\operatorname{SymL}_p(F)=n$.
\end{proof}

\begin{cor}
When $\operatorname{char}(k)=2$, the field $F=k(\!(\alpha)\!)(\!(\beta)\!)(\!(\gamma)\!)$ is not linked.
\end{cor}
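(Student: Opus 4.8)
The plan is to read the statement straight off \Lref{Div}. Take $p=2$, $n=3$, and rename $\alpha_1,\alpha_2,\alpha_3$ as $\alpha,\beta,\gamma$, so that $F=k(\!(\alpha)\!)(\!(\beta)\!)(\!(\gamma)\!)$. Then \Lref{Div} tells us that
\[
A=[\beta^{-1},\alpha)_{2,F}\otimes[\gamma^{-1},\beta)_{2,F}
\]
is a division algebra over $F$. This is a biquaternion algebra (degree $4$), so being a division algebra it has index $4$ and is therefore not Brauer-equivalent to any quaternion algebra, nor is it split. The whole point is that the existence of such an $A$ obstructs linkedness.

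Next I would pass to quadratic Pfister forms. Put $Q_1=[\beta^{-1},\alpha)_{2,F}$ and $Q_2=[\gamma^{-1},\beta)_{2,F}$, and let $\pi_1,\pi_2$ be their norm forms, which are $2$-fold quadratic Pfister forms; here I would use the standard fact that in characteristic $2$ a quaternion algebra is classified up to isomorphism by its norm form, so that isometric norm forms correspond to isomorphic quaternion algebras. Since $A$ is a division algebra, neither $Q_1$ nor $Q_2$ splits, so $\pi_1$ and $\pi_2$ are anisotropic (a Pfister form is hyperbolic as soon as it is isotropic).

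I would then derive a contradiction as follows. Suppose $F$ were linked, i.e.\ $I_q^2F$ were linked. Then $\pi_1$ and $\pi_2$ would share a common $1$-fold Pfister factor $\tau$. Now $\tau$ is either bilinear, of the shape $\langle 1,b\rangle$, or quadratic, of the shape $[1,a]$. In the first case the decompositions $\pi_i=\tau\otimes\tau_i'$ with $\tau_i'$ quadratic $1$-fold let us rewrite $Q_1=[a_1,b)_{2,F}$ and $Q_2=[a_2,b)_{2,F}$ with a common Kummer slot $b$, whence $Q_1\otimes Q_2\sim[a_1+a_2,b)_{2,F}$; in the second case we similarly rewrite $Q_1=[a,b_1)_{2,F}$ and $Q_2=[a,b_2)_{2,F}$ with a common Artin--Schreier slot $a$, whence $Q_1\otimes Q_2\sim[a,b_1b_2)_{2,F}$. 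Either way $A=Q_1\otimes Q_2$ would be Brauer-equivalent to a quaternion algebra, contradicting that $A$ is a division algebra of index $4$. Hence $\pi_1$ and $\pi_2$ share no $1$-fold Pfister factor, $I_q^2F$ is not linked, and so $F$ is not linked.

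I do not expect a genuine obstacle here — this really is a corollary — but the step calling for a little care is the bookkeeping in the previous paragraph: matching ``common $1$-fold Pfister factor of the norm forms'' with ``common slot of the presentations'' (inseparable linkage via a shared Kummer slot, separable linkage via a shared Artin--Schreier slot), together with the characteristic-$2$ identities $[a_1,b)_{2,F}\otimes[a_2,b)_{2,F}\sim[a_1+a_2,b)_{2,F}$ and $[a,b_1)_{2,F}\otimes[a,b_2)_{2,F}\sim[a,b_1b_2)_{2,F}$, i.e.\ additivity in the Artin--Schreier slot and multiplicativity in the Kummer slot of the class in $\prescript{}{2}Br(F)$. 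If one prefers to sidestep this, note simply that $A$ is a class of symbol length $2$ in $\prescript{}{2}Br(F)$, so $\operatorname{SymL}_2(F)\geq 2>1$, and a field with $\operatorname{SymL}_2(F)>1$ cannot be linked.
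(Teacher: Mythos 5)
Your proposal is correct and is essentially the paper's (unwritten) argument: the corollary is immediate from \Lref{Div} with $p=2$, $n=3$, since a division biquaternion algebra has index $4$ and so cannot be Brauer-equivalent to a single quaternion algebra, forcing $\operatorname{SymL}_2(F)\geq 2$. Your closing one-liner via symbol length is exactly the intended deduction (note the paper \emph{defines} linked fields by $\operatorname{SymL}_2(F)=1$, so the careful slot-matching detour through norm forms, while correct, is not needed).
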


Note that this means that $I_q^2 F$ is not linked for $F=k(\!(\alpha)\!)(\!(\beta)\!)(\!(\gamma)\!)$. In the next section we generalize this fact to arbitrary quadratic and bilinear Pfister forms.

\section{Quadratic and Bilinear Forms}

We first cover the standard case:
\begin{prop}
Given an algebraically closed field $k$ of $\operatorname{char}(k) \neq 2$, $I^n F$ is linked for $F=k(\!(\alpha_1)\!)\dots(\!(\alpha_{n+1})\!)$.
\end{prop}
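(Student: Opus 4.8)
The plan is to exploit the structure of $F$ as an $(n+1)$-fold iterated Laurent series field over an algebraically closed field of characteristic $\neq 2$, where valuation theory makes Pfister forms very rigid. Equip $F$ with the $(\alpha_1,\dots,\alpha_{n+1})$-adic valuation, so that $\Gamma_F = \mathbb{Z}^{n+1}$ and the residue field is $k$, which is quadratically closed. Since $[F:F^2]$ is finite (indeed $\operatorname{rank}_2(F) = n+1$, as $k$ is perfect and $\chr k\ne 2$ makes $[k:k^2]=1$), every anisotropic bilinear $n$-fold Pfister form over $F$ is, up to scaling, determined by a choice of $n$ of the uniformizers $\alpha_i$ modulo squares — more precisely, the anisotropic classes in $I^nF$ are spanned by $\pff{\alpha_{i_1},\dots,\alpha_{i_n}}$ for increasing multi-indices. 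This is the residue-computation heart of the argument: one shows via Springer-type decomposition along each of the $n+1$ successive valuations that an anisotropic $\langle\langle a_1,\dots,a_n\rangle\rangle$ over $F$ can be rewritten, after a change of slots, so that each $a_j$ is (a square times) one of the $\alpha_i$, and the $n$ indices used are distinct.

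With this normal form in hand, the combinatorial step is immediate: any two such Pfister forms correspond to two $n$-element subsets $S, T \subseteq \{1,\dots,n+1\}$; since $|S| = |T| = n$ and the ground set has only $n+1$ elements, $|S \cap T| \geq n-1$, so the two forms share the common $(n-1)$-fold factor $\pff{\alpha_i : i \in S\cap T}$ (after matching up the relevant slots). That exhibits the required common $(n-1)$-fold factor and proves $I^nF$ is linked.

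An alternative, and probably cleaner, route avoids an explicit normal-form classification and instead argues inductively on $n$ using the standard second-residue/Springer form of the Witt group of a complete discretely valued field: $W(F) \cong W(K) \oplus W(K)$ for $F = K(\!(\alpha_{n+1})\!)$, compatibly with the ideal filtration, giving $I^nF \cong I^nK \oplus \bigl(\langle\langle\alpha_{n+1}\rangle\rangle \cdot I^{n-1}K\bigr)$. One takes two anisotropic $n$-fold Pfister forms $\varphi,\psi$ over $F$, looks at their first and second residues with respect to $\alpha_{n+1}$ (which are themselves Pfister or zero over $K = k(\!(\alpha_1)\!)\cdots(\!(\alpha_n)\!)$), applies the induction hypothesis that $I^{n-1}K$ and $I^nK$ are suitably linked over the smaller iterated Laurent field, and reassembles a common $(n-1)$-fold factor over $F$; the base case $n=1$ (or $n=0$) is trivial since $k$ is quadratically closed. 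The main obstacle in either approach is the same: carefully tracking anisotropy and the Pfister-factor relation through the residue maps — in particular ensuring that the common factor produced downstairs actually lifts to a common factor of $\varphi$ and $\psi$ themselves, rather than merely of their residues, which is where one must use that residually the field is quadratically closed so that the "unit part" of each form contributes nothing new.
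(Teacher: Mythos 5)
Your first (main) route contains a false intermediate claim. You assert that an anisotropic $n$-fold Pfister form over $F$ can be rewritten so that each slot is a square times one of the uniformizers $\alpha_i$, with $n$ distinct indices used, and your combinatorial step (two $n$-element subsets of $\{1,\dots,n+1\}$ meet in at least $n-1$ elements) depends on this. The normal form does not exist in general: by Hensel's Lemma the slots can be taken to be monomials in the $\alpha_i$, and the isometry class of the anisotropic Pfister form is determined by the $\FF_2$-subspace of $\mul{F}/(\mul{F})^2\cong\FF_2^{n+1}$ spanned by its slots --- but that subspace need not be a coordinate subspace. For instance, over $k(\!(\alpha_1)\!)(\!(\alpha_2)\!)(\!(\alpha_3)\!)$ the form $\pff{\alpha_1\alpha_2,\,\alpha_2\alpha_3}$ is anisotropic (its diagonal entries have pairwise distinct values mod $2\Gamma_F$), yet the plane spanned by $\alpha_1\alpha_2$ and $\alpha_2\alpha_3$ in $\FF_2^3$ contains no standard basis vector, so this form is not isometric to any $\pff{\alpha_i,\alpha_j}$. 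The repair is exactly the paper's argument: work with arbitrary $n$-dimensional subspaces $W_a, W_b$ of the $(n+1)$-dimensional space $\mul{F}/(\mul{F})^2$ (anisotropy forces the slots to be linearly independent, via $\pff{a,b}=\pff{ab,b}$), note that $\dim(W_a\cap W_b)\geq n-1$, choose a basis $c_1,\dots,c_{n-1}$ of an $(n-1)$-dimensional subspace of the intersection, extend it to bases of $W_a$ and of $W_b$, and use the same slot-change identity to exhibit $\pff{c_1,\dots,c_{n-1}}$ as a common factor. So the subspace-intersection idea you are circling is the right one, but the reduction to coordinate subsets is where your write-up breaks.

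Your alternative inductive route via the Springer decomposition $W(F)\cong W(K)\oplus W(K)$ is plausible but is only a sketch: the ``reassembly'' step --- producing a common $(n-1)$-fold factor of $\varphi$ and $\psi$ over $F$ from linkage data of their residue forms over $K$ --- is precisely the nontrivial content, and a naive induction hypothesis (``$I^nK$ is linked'') is not obviously strong enough, since the two residues of $\varphi$ and the two residues of $\psi$ must be linked \emph{compatibly}. As written, neither route is complete; the direct linear-algebra argument, correctly formulated over subspaces rather than subsets, is both shorter and what the paper does.
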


\begin{proof}
Consider the $n$-fold Pfister forms $\varphi=\langle\! \langle a_1,\dots,a_n \rangle\!\rangle$ and $\psi=\langle\! \langle b_1,\dots,b_n \rangle\!\rangle$. By Hensel's Lemma, the $a$-s and $b$-s can be assumed to be products of powers of the $\alpha$-s.
Consider the group $F^\times/(F^\times)^2$ as a vector space $V \cong \mathbb{F}_2^{\times (n+1)}$.
By the identity $\langle\!\langle a,b \rangle\!\rangle=\langle\!\langle ab,b \rangle\!\rangle$, and the assumption that $\varphi$ and $\psi$ are anisotropic, the elements $a_1,\dots,a_n$ span an $n$-dimensional subspace $W_a$ of $V$ and the elements $b_1,\dots,b_n$ span an $n$-dimensional subspace $W_b$ of $V$.
Therefore, $W_a \cap W_b$ is at least of dimension $n-1$.
Take a basis $c_1,\dots,c_{n-1}$ for this $(n-1)$-dimensional subspace of the intersection, and $\langle\!\langle c_1,\dots,c_{n-1} \rangle\!\rangle$ is a common factor of $\varphi$ and $\psi$.
\end{proof}

When $\operatorname{char}(k)=2$ we need to distinguish between quadratic and bilinear forms.

\begin{thm}\label{quadratic}
Given a field $k$ of $\operatorname{char}(k)=2$, $I_q^n F$ is not linked for $F=k(\!(\alpha_1)\!)\dots(\!(\alpha_{n+1})\!)$.
\end{thm}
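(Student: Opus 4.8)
The plan is to exhibit two explicit anisotropic quadratic $n$‑fold Pfister forms over $F$ and to rule out a common $(n-1)$‑fold Pfister factor by a dimension count on their orthogonal sum. Concretely, I would take
\[
\varphi=\langle\!\langle \alpha_1,\dots,\alpha_{n-1},\alpha_n^{-1}]]=\langle\!\langle \alpha_1,\dots,\alpha_{n-1}\rangle\!\rangle\otimes[1,\alpha_n^{-1}],\qquad
\psi=\langle\!\langle \alpha_2,\dots,\alpha_n,\alpha_{n+1}^{-1}]]=\langle\!\langle \alpha_2,\dots,\alpha_n\rangle\!\rangle\otimes[1,\alpha_{n+1}^{-1}],
\]
where the last entry is the Artin--Schreier slot, $\langle\!\langle\cdots\rangle\!\rangle$ denotes a bilinear Pfister form, and $[1,c]$ denotes the quadratic form $x^2+xy+cy^2$. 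Inverses are used in the Artin--Schreier slots so that $\alpha_n^{-1},\alpha_{n+1}^{-1}\notin\wp(F)$. The design is that $\psi$ is ``ramified'' for the $\alpha_{n+1}$‑adic valuation while $\varphi$ is unramified (indeed does not involve $\alpha_{n+1}$ at all), and that the two bilinear parts $\langle\!\langle\alpha_1,\dots,\alpha_{n-1}\rangle\!\rangle$ and $\langle\!\langle\alpha_2,\dots,\alpha_n\rangle\!\rangle$ differ.

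The first step is to record the linkage obstruction, treating the bilinear and quadratic cases uniformly. Suppose $\varphi$ and $\psi$ shared an $(n-1)$‑fold Pfister factor $\tau$. If $\tau$ is a quadratic $(n-1)$‑fold Pfister form, write $\varphi\cong\tau\otimes\langle 1,a\rangle$ and $\psi\cong\tau\otimes\langle 1,b\rangle$; since $\langle 1,1\rangle$ is metabilic, $\varphi+\psi=\tau\otimes\langle a,b\rangle=a\tau\perp b\tau$ in $W_q(F)$, a form of dimension $2^n$. If $\tau$ is a bilinear $(n-1)$‑fold Pfister form, write $\varphi\cong\tau\otimes[1,c]$ and $\psi\cong\tau\otimes[1,c']$; using the characteristic‑$2$ identity $[1,c]\perp[1,c']\cong[1,c+c']\perp\mathbb{H}$, one gets $\varphi+\psi=\tau\otimes[1,c+c']$ in $W_q(F)$, again of dimension $2^n$. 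In either case $\dim(\varphi\perp\psi)_{\mathrm{an}}\le 2^n$. Hence it suffices to prove that $\varphi\perp\psi$ is anisotropic, i.e.\ that $\dim(\varphi\perp\psi)_{\mathrm{an}}=2^{n+1}>2^n$ (which forces $n\ge 2$; this matches the fact that $I_q^1F$ is trivially linked).

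For the anisotropy I would use the first and second residue homomorphisms $\partial^1\colon W_q(F)\to W_q(F_0)$ and $\partial^2\colon W_q(F)\to W(F_0)$ of the $\alpha_{n+1}$‑adic valuation, where $F_0=k(\!(\alpha_1)\!)\dots(\!(\alpha_n)\!)$, together with the characteristic‑$2$ analogue of Springer's theorem, $\dim(q)_{\mathrm{an}}\ge \dim(\partial^1q)_{\mathrm{an}}+2\dim(\partial^2q)_{\mathrm{an}}$ (see \cite{EKM, TignolWadsworth:2015}). Since $\varphi$ is unramified with residue $\langle\!\langle \alpha_1,\dots,\alpha_{n-1},\alpha_n^{-1}]]$ over $F_0$, one has $\partial^1\varphi=\varphi$, $\partial^2\varphi=0$; since $\langle\!\langle\alpha_2,\dots,\alpha_n\rangle\!\rangle$ is unramified and $\alpha_{n+1}^{-1}$ has odd value, the projection formula and $\partial^2[1,\alpha_{n+1}^{-1}]=\langle 1\rangle$ give $\partial^1\psi=0$ and $\partial^2\psi=\langle\!\langle\alpha_2,\dots,\alpha_n\rangle\!\rangle$. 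Thus $\partial^1(\varphi\perp\psi)=\varphi$ and $\partial^2(\varphi\perp\psi)=\langle\!\langle\alpha_2,\dots,\alpha_n\rangle\!\rangle$, and both are anisotropic over $F_0$: the bilinear Pfister form $\langle\!\langle\alpha_2,\dots,\alpha_n\rangle\!\rangle$ is anisotropic because $\alpha_2,\dots,\alpha_n$ form part of a $2$‑basis of $F_0$, and the same residue computation for the $\alpha_n$‑adic valuation (whose second residue of $\varphi$ is $\langle\!\langle\alpha_1,\dots,\alpha_{n-1}\rangle\!\rangle$, anisotropic over $k(\!(\alpha_1)\!)\dots(\!(\alpha_{n-1})\!)$) shows $\langle\!\langle \alpha_1,\dots,\alpha_{n-1},\alpha_n^{-1}]]$ is anisotropic over $F_0$ (the $n=2$ instance of this anisotropy is also visible through \Lref{Div}). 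The Springer inequality then gives $\dim(\varphi\perp\psi)_{\mathrm{an}}\ge 2^n+2\cdot 2^{n-1}=2^{n+1}=\dim(\varphi\perp\psi)$, so $\varphi\perp\psi$ is anisotropic; with the first step, $\varphi$ and $\psi$ share no $(n-1)$‑fold Pfister factor and $I_q^nF$ is not linked. No hypothesis on $k$ beyond $\operatorname{char}(k)=2$ is used.

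The main obstacle is the residue bookkeeping of the third paragraph: one must be careful with the characteristic‑$2$ Springer‑type inequality, with the projection formula $\partial^2(\beta\otimes q)=\overline{\beta}\otimes\partial^2q$ for unramified bilinear $\beta$, and with the residues of the ramified form $[1,\alpha_{n+1}^{-1}]$; the rest (the uniform reduction of ``common $(n-1)$‑fold factor'' to a bound on $\dim(\varphi\perp\psi)_{\mathrm{an}}$, via $\langle1,1\rangle$ metabolic and $[1,c]\perp[1,c']\cong[1,c+c']\perp\mathbb{H}$) is formal.
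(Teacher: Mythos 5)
Your reduction step is fine and is essentially what the paper uses implicitly: a common $(n-1)$-fold factor (bilinear or quadratic) forces $\dim(\varphi\perp\psi)_{\mathrm{an}}\leq 2^n$. The fatal problem is the anisotropy claim. The form $\varphi\perp\psi$ is \emph{never} anisotropic for two quadratic Pfister forms in characteristic $2$: both $\varphi$ and $\psi$ represent $1$, so $\varphi(u)+\psi(v)=1+1=0$ for a suitable nonzero $(u,v)$. Concretely, your $\varphi\perp\psi$ contains $[1,\alpha_n^{-1}]\perp[1,\alpha_{n+1}^{-1}]\cong[1,\alpha_n^{-1}+\alpha_{n+1}^{-1}]\perp\HH$ as a subform --- by the very identity you quote in your reduction step --- so it visibly contains a hyperbolic plane. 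Consequently the residue bookkeeping in your third paragraph cannot be right. The specific error is the assignment $\partial^2[1,\alpha_{n+1}^{-1}]=\langle 1\rangle$ combined with the inequality $\dim(q)_{\mathrm{an}}\geq\dim(\partial^1 q)_{\mathrm{an}}+2\dim(\partial^2 q)_{\mathrm{an}}$: that two-residue Springer formalism is the one for residue characteristic $\neq 2$ (or for bilinear forms, which diagonalize); a nonsingular block $[1,c]$ with $v(c)$ odd does not split as an unramified part plus $\pi$ times an unramified part, and in residue characteristic $2$ the quadratic Witt group of a Laurent series field has a more complicated decomposition. Already for $n=1$ your computation would declare $[1,\alpha_1^{-1}]\perp[1,\alpha_2^{-1}]$ anisotropic over $k(\!(\alpha_1)\!)(\!(\alpha_2)\!)$, which is false, and the same $2$-dimensional blocks sit inside your forms for every $n$.

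The argument is repairable, and the repair is exactly what the paper does: choose $\varphi$ and $\psi$ to share an $(n-2)$-fold bilinear factor, cancel the hyperbolic planes first using $[1,a]\perp[1,b]\cong[1,a+b]\perp\HH$ so as to replace $\varphi\perp\psi$ by a Witt-equivalent form $\omega$ of dimension $3\cdot 2^{n-1}$, and then prove that $\omega$ (not $\varphi\perp\psi$) is anisotropic by comparing the values of its $2$-dimensional blocks under the iterated $(\alpha_1,\dots,\alpha_{n+1})$-adic valuation; since $3\cdot 2^{n-1}>2^n$, this contradicts the bound from your first step. Your choice of forms would also admit such a reduction (their bilinear parts share the $(n-2)$-fold factor $\langle\!\langle\alpha_2,\dots,\alpha_{n-1}\rangle\!\rangle$), but as written the key anisotropy statement you set out to prove is simply false.
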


\begin{proof}
Let $\varphi=\langle\!\langle \alpha_1,\dots,\alpha_{n-2},\alpha_{n-1},\alpha_n^{-1}]\!]$ and $\psi=\langle\!\langle \alpha_1,\dots,\alpha_{n-2},\alpha_n,\alpha_{n+1}^{-1}]\!]$.
The form $\varphi \perp \psi$ is Witt equivalent to $\omega=\langle\!\langle \alpha_1,\dots,\alpha_{n-2}\rangle\!\rangle \otimes [1,\alpha_n^{-1}+\alpha_{n+1}^{-1}] \perp \alpha_{n-1} \langle\!\langle \alpha_1,\dots,\alpha_{n-2}\rangle\!\rangle \otimes [1,\alpha_n^{-1}] \perp \alpha_n \langle\!\langle \alpha_1,\dots,\alpha_{n-2}\rangle\!\rangle \otimes [1,\alpha_{n+1}^{-1}]$.
For $\varphi$ and $\psi$ to share a common $(n-1)$-fold factor, $\omega$ must be isotropic, but it is not.
This can be seen by considering the right-to-left $(\alpha_1,\dots,\alpha_{n+1})$-adic valuation. The values modulo $(2\mathbb{Z})^{\times (n+1)}$ of all the symplectic blocks in $\omega$ are distinct, and so the form must be anisotropic.
\end{proof}

\begin{thm}\label{bilinear}
Let $F$ be a field of $\operatorname{char}(F)=2$ and $\operatorname{rank}_2(F) \geq n+1$. Then $I^n F$ is not linked.
\end{thm}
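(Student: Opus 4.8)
The plan is to exhibit, using a maximal $2$-independent family in $F$, two anisotropic bilinear $n$-fold Pfister forms with no common $(n-1)$-fold bilinear Pfister factor; the obstruction is a dimension count for the value sets of their pure parts, in the spirit of the proof of \Tref{quadratic}, with one slot ``twisted'' by $1$ playing a role analogous to the role-crossing of $\alpha_n$ there. Since $\operatorname{rank}_2(F)\geq n+1$, fix $\alpha_1,\dots,\alpha_{n+1}\in F^\times$ that are $2$-independent, so that every subset of size $j$ generates a subfield of degree $2^j$ over $F^2$, and set
$$\varphi=\langle\!\langle \alpha_1,\dots,\alpha_{n-1},\alpha_n\rangle\!\rangle,\qquad \psi=\langle\!\langle \alpha_1,\dots,\alpha_{n-2},\alpha_{n+1},\,1+\alpha_{n-1}\rangle\!\rangle.$$
These are $n$-fold bilinear Pfister forms, and they are anisotropic: the slots of $\varphi$ are $2$-independent by choice, and for $\psi$ one notes that $1+\alpha_{n-1}\notin F^2(\alpha_1,\dots,\alpha_{n-2},\alpha_{n+1})$ — otherwise $\alpha_{n-1}=(1+\alpha_{n-1})+1$ would lie there — so that $F^2(\alpha_1,\dots,\alpha_{n-2},\alpha_{n+1},1+\alpha_{n-1})$ has degree $2^n$ over $F^2$ and the slots of $\psi$ are $2$-independent as well.

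Suppose, for contradiction, that a bilinear $(n-1)$-fold Pfister form $\rho$ divides both, say $\varphi\cong\rho\otimes\langle\!\langle x\rangle\!\rangle\cong\rho\perp x\rho$ and $\psi\cong\rho\perp y\rho$ for some $x,y\in F^\times$. Then $\rho$ is anisotropic (otherwise $\varphi$ would be isotropic). Writing $\rho\cong\langle 1\rangle\perp\rho'$, $\varphi\cong\langle 1\rangle\perp\varphi'$, $\psi\cong\langle 1\rangle\perp\psi'$ and cancelling the nondegenerate form $\langle 1\rangle$ (Witt cancellation for nondegenerate bilinear forms holds in all characteristics) gives $\varphi'\cong\rho'\perp x\rho$ and $\psi'\cong\rho'\perp y\rho$, hence $D(\rho')\subseteq D(\varphi')\cap D(\psi')$. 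For an anisotropic bilinear Pfister form $\langle\!\langle a_1,\dots,a_m\rangle\!\rangle$ the pure part is $\bigperp_{\varnothing\neq S\subseteq\{1,\dots,m\}}\langle\prod_{i\in S}a_i\rangle$, so its value set is the $F^2$-span of the $2^m-1$ nonempty products $\prod_{i\in S}a_i$ — an $F^2$-subspace of dimension $2^m-1$ by anisotropy. In particular $\dim_{F^2}D(\rho')=2^{n-1}-1$.

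The contradiction comes from bounding $D(\varphi')\cap D(\psi')$. One has $D(\varphi')\subseteq F^2(\alpha_1,\dots,\alpha_n)$ and $D(\psi')\subseteq F^2(\alpha_1,\dots,\alpha_{n-2},\alpha_{n+1},1+\alpha_{n-1})\subseteq F^2(\alpha_1,\dots,\alpha_{n-1},\alpha_{n+1})$, and $2$-independence forces $F^2(\alpha_1,\dots,\alpha_n)\cap F^2(\alpha_1,\dots,\alpha_{n-1},\alpha_{n+1})=F^2(\alpha_1,\dots,\alpha_{n-1})=:L$, so $D(\varphi')\cap D(\psi')\subseteq L$. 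Now $L$ has $F^2$-dimension $2^{n-1}$ with monomial basis $\{\prod_{i\in S}\alpha_i:S\subseteq\{1,\dots,n-1\}\}$; expanding $D(\varphi')$ and $D(\psi')$ in this basis and in the analogous monomial basis of the value field of $\psi$ — where every relevant subspace turns out to be a coordinate subspace — one finds $D(\varphi')\cap L=D_0$, the hyperplane of $L$ spanned by the \emph{nonempty} monomials in $\alpha_1,\dots,\alpha_{n-1}$ (i.e.\ the ``constant term zero'' hyperplane), while $D(\psi')\cap L=D_1$, the hyperplane spanned by the nonempty monomials in $\alpha_1,\dots,\alpha_{n-2},1+\alpha_{n-1}$. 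Since $1+\alpha_{n-1}\in D_1\setminus D_0$ we have $D_0\neq D_1$, whence $\dim_{F^2}(D_0\cap D_1)=2^{n-1}-2<2^{n-1}-1=\dim_{F^2}D(\rho')$, a contradiction. Thus $\varphi$ and $\psi$ share no $(n-1)$-fold factor, and $I^n F$ is not linked.

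The step I expect to be the real work is the last one: verifying that $D(\varphi')\cap D(\psi')$, known a priori only to lie in $L$, is exactly $D_0\cap D_1$, and that the twist $1+\alpha_{n-1}$ genuinely pushes $D_1$ off the constant-term hyperplane $D_0$. Everything there is linear algebra over $F^2$ with the monomial bases once $2$-independence is secured; the structural inputs are only that a Pfister divisor is an orthogonal summand, Witt cancellation for bilinear forms, and the above description of the value set of the pure part of an anisotropic bilinear Pfister form.
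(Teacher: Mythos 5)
Your construction and the decisive computation are the same as the paper's: you take the same pair of forms (up to reordering slots), reduce to comparing the $F^2$-spans of the pure parts' value sets, and derive the contradiction $2^{n-1}-2<2^{n-1}-1$; your presentation of the intersection as two distinct hyperplanes $D_0\neq D_1$ of $L=F^2(\alpha_1,\dots,\alpha_{n-1})$ is just a repackaging of the paper's $W\oplus\alpha_{n-1}W$. You are also right to make explicit the step the paper leaves implicit, namely why a common factor $\rho$ forces $D(\rho')\subseteq D(\varphi')\cap D(\psi')$.

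One justification is wrong, though: Witt cancellation for nondegenerate symmetric bilinear forms \emph{fails} in characteristic $2$ (e.g.\ $\langle 1\rangle\perp\mathbb{H}\cong\langle 1,1,1\rangle$ while $\mathbb{H}\not\cong\langle 1,1\rangle$, since one is alternating and the other is not), so you cannot simply ``cancel $\langle 1\rangle$''. The step is nevertheless salvageable exactly where you need it: $\rho'\perp x\rho$ is anisotropic (it embeds in the anisotropic form $\varphi$) and is Witt-equivalent to $\varphi'$, and the anisotropic part of a Witt class \emph{is} unique up to isometry in every characteristic; equivalently, invoke the well-definedness of the pure subform of an anisotropic bilinear Pfister form from \cite[Chapter 1, Section 6]{EKM}, which is the fact the paper itself relies on. With that substitution your proof is complete.
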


\begin{proof}
Let $\langle\!\langle\alpha_1,\dots,\alpha_{n+1}\rangle\!\rangle$ be an anisotropic bilinear $n$-fold Pfister form over $F$.
Consider the forms $\varphi=\langle \! \langle \alpha_1,\dots,\alpha_{n-2},\alpha_{n-1},\alpha_n\rangle\!\rangle$ and $\psi=\langle \! \langle \alpha_1,\dots,\alpha_{n-2},\alpha_{n-1}+1,\alpha_{n+1}\rangle\!\rangle$.
Recall that an anisotropic bilinear Pfister form $\tau$ has a ``pure subform" $\tau'$ which is the unique anisotropic symmetric bilinear form satisfying $\tau'\perp \langle 1 \rangle=\tau$ (see \cite[Chapter 1, Section 6]{EKM}).
We therefore denote by $\varphi'$ and $\psi'$ the pure subforms of $\varphi$ and $\psi$, respectively.
The set $D(\varphi') \cup \{0\}$ is the $(2^n-1)$-dimensional $F^2$-vector space
$$V_\varphi=\bigoplus_{(e_1,\dots,e_n)\in \{0,1\}^{\times n} \setminus \{(0,\dots,0)\}} F^2 \alpha_1^{e_1} \dots \alpha_n^{e_n}.$$
The set $D(\psi') \cup \{0\}$ is the $(2^n-1)$-dimensional $F^2$-vector space
$$V_\psi=\bigoplus_{(e_1,\dots,e_n)\in \{0,1\}^{\times n} \setminus \{(0,\dots,0)\}} F^2 \alpha_1^{e_1} \dots \alpha_{n-2}^{e_{n-2}} (\alpha_{n-1}+1)^{e_{n-1}} \alpha_{n+1}^{e_{n+1}}.$$
In order for $\psi$ and $\varphi$ to share an $(n-1)$-fold factor, the intersection $V_\varphi \cap V_\psi$ should be of dimension at least $2^{n-1}-1$.
However, the intersection of these two spaces is
$W \oplus \alpha_{n-1} W$ where $W=\bigoplus_{(e_1,\dots,e_{n-2})\in \{0,1\}^{\times (n-1)} \setminus \{(0,\dots,0)\}} F^2 \alpha_1^{e_1} \dots \alpha_{n-2}^{e_{n-2}}$, which is of dimension $(2^{n-2}-1)\cdot 2=2^{n-1}-2$, i.e., less than $2^{n-1}-1$.
Therefore, the forms do not share an $(n-1)$-fold factor.
\end{proof}

\begin{cor}
Given a field $F$ of $\operatorname{char}(F)=2$ with $I^n F \neq 0$, the following are equivalent:
\begin{enumerate}
\item $I^n F$ is linked.
\item $I^n F$ is 3-linked.
\item The 2-rank of $F$ is $n$.
\end{enumerate}
\end{cor}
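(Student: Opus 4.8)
The plan is to run the cycle $(1)\Rightarrow(3)\Rightarrow(2)\Rightarrow(1)$, combining \Tref{bilinear} with one standard fact about bilinear Pfister forms in characteristic $2$; I may assume $n\geq 2$, the smaller cases being degenerate.

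The easy implications come first. Since an anisotropic bilinear $n$-fold Pfister form $\langle\!\langle a_1,\dots,a_n\rangle\!\rangle$ forces $a_1,\dots,a_n$ to be $2$-independent, $I^nF\neq 0$ already gives $\operatorname{rank}_2(F)\geq n$, so only $\operatorname{rank}_2(F)=n$ and $\operatorname{rank}_2(F)\geq n+1$ are possible. If $I^nF$ is linked, \Tref{bilinear} rules out the second case, giving $(1)\Rightarrow(3)$. For $(2)\Rightarrow(1)$, apply $3$-linkedness to the triple $\varphi,\psi,\varphi$: a common $(n-1)$-fold factor of these three is in particular a common factor of $\varphi$ and $\psi$.

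For $(3)\Rightarrow(2)$, assume $\operatorname{rank}_2(F)=n$, so $[F:F^2]=2^n$. For any anisotropic bilinear $n$-fold Pfister form $\varphi=\langle\!\langle a_1,\dots,a_n\rangle\!\rangle$, $2$-independence of the $a_i$ together with $[F:F^2]=2^n$ gives $F^2(a_1,\dots,a_n)=F$; as in the proof of \Tref{bilinear}, $D(\varphi)\cup\{0\}=F$ and $V_\varphi:=D(\varphi')\cup\{0\}$ is the $F^2$-span of the $2^n-1$ nontrivial monomials $a_1^{e_1}\cdots a_n^{e_n}$, hence an $F^2$-linear complement of $F^2$ in $F$. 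I would also use the known criterion that an anisotropic $(n-1)$-fold bilinear Pfister form $\tau$ is a factor of an anisotropic $n$-fold one $\varphi$ if and only if $V_\tau\subseteq V_\varphi$ (with $V_\tau=D(\tau')\cup\{0\}$): one direction reads off $\varphi'\cong\tau'\perp c\tau$ from $\varphi\cong\tau\otimes\langle 1,c\rangle$, and the other is the bilinear characteristic-$2$ analogue of the subform theorem for Pfister forms. So, given anisotropic $n$-fold Pfister forms $\varphi_1,\varphi_2,\varphi_3$, put $W:=V_{\varphi_1}\cap V_{\varphi_2}\cap V_{\varphi_3}$; it is an $F^2$-subspace of $F$ of codimension $\leq 3$ with $W\cap F^2=0$, and it suffices to find $2$-independent $d_1,\dots,d_{n-1}$ with $V_{\langle\!\langle d_1,\dots,d_{n-1}\rangle\!\rangle}\subseteq W$: then $\tau:=\langle\!\langle d_1,\dots,d_{n-1}\rangle\!\rangle$ is anisotropic and is a factor of every $\varphi_i$.

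I would build the $d_i$ greedily. Suppose $d_1,\dots,d_k$ are $2$-independent with $V_{\langle\!\langle d_1,\dots,d_k\rangle\!\rangle}\subseteq W$. Since $V_{\langle\!\langle d_1,\dots,d_{k+1}\rangle\!\rangle}=V_{\langle\!\langle d_1,\dots,d_k\rangle\!\rangle}\oplus d_{k+1}F^2(d_1,\dots,d_k)$, one needs $d_{k+1}\notin F^2(d_1,\dots,d_k)$ (which keeps the tuple $2$-independent) lying in $\Lambda:=\bigcap_{(e_1,\dots,e_k)\in\{0,1\}^k}\big(d_1^{e_1}\cdots d_k^{e_k}\big)^{-1}W$. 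Here $\Lambda$ is the intersection of $2^k$ translates of $W$, so $\dim_{F^2}\Lambda\geq 2^n-3\cdot 2^k$, while $\dim_{F^2}F^2(d_1,\dots,d_k)=2^k$; for $k\leq n-2$ we have $2^n-3\cdot 2^k\geq 2^k$, and since $1\in F^2(d_1,\dots,d_k)$ but $1\notin W\supseteq\Lambda$, the inclusion $\Lambda\subseteq F^2(d_1,\dots,d_k)$ is impossible, so a suitable $d_{k+1}$ exists (and $k$ runs only up to $n-2$). This is exactly where the constant $3$ in ``$3$-linked'' enters, and it is the delicate point: at the last step the bound $2^n-3\cdot 2^{n-2}=2^{n-2}$ matches $\dim_{F^2}F^2(d_1,\dots,d_{n-2})$ with no room to spare, so the step survives only thanks to the observation $1\notin W$. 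The main obstacles are therefore the two ingredients of $(3)\Rightarrow(2)$: invoking the divisibility criterion for bilinear Pfister forms in characteristic $2$ in precisely the needed pure-part form, and pushing the greedy construction through its tight final step.
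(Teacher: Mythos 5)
Your cycle $(1)\Rightarrow(3)\Rightarrow(2)\Rightarrow(1)$ matches the paper's, and the implications $(1)\Rightarrow(3)$ (via Theorem \ref{bilinear} plus the observation that $I^nF\neq 0$ forces $\operatorname{rank}_2(F)\geq n$) and $(2)\Rightarrow(1)$ are handled identically. The genuine difference is in $(3)\Rightarrow(2)$: the paper simply cites \cite[Corollary 3.2]{Chapman:2018}, whereas you reprove that implication from scratch. Your argument is correct: identifying $D(\varphi')\cup\{0\}$ with a codimension-one $F^2$-complement of $F^2$ in $F$ when $[F:F^2]=2^n$, intersecting three such hyperplanes to get $W$ of codimension at most $3$ with $1\notin W$, and then greedily building $2$-independent $d_1,\dots,d_{n-1}$ so that all nontrivial monomials land in $W$. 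The codimension bookkeeping ($\operatorname{codim}\Lambda\leq 3\cdot 2^k$ versus $\dim F^2(d_1,\dots,d_k)=2^k$) works for $k\leq n-3$, and your observation that $1\in F^2(d_1,\dots,d_{n-2})\setminus\Lambda$ correctly rescues the tight final step, where equality of dimensions would otherwise leave room for $\Lambda=F^2(d_1,\dots,d_{n-2})$. What your route buys is a self-contained proof of the implication the paper outsources; what it costs is reliance on the divisibility criterion for bilinear Pfister forms in characteristic $2$ (that $\tau$ divides $\varphi$ whenever $D(\tau')\subseteq D(\varphi')$, via the norm-field description of anisotropic bilinear Pfister forms), which you invoke as ``known'' but which is exactly the nontrivial input that would need a precise reference (it is the same machinery underlying \cite{Chapman:2018}). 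You should also note that the equivalence as stated degenerates for $n=1$, where $(1)$ and $(2)$ hold vacuously; your restriction to $n\geq 2$ is the right reading, but it is a restriction on the statement, not merely on the proof.
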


\begin{proof}
By the assumption, the 2-rank of $F$ is at least $n$.
By Theorem \ref{bilinear}, (1) implies (3).
By \cite[Corollary 3.2]{Chapman:2018}, (3) implies (2), and clearly (2) implies (1).
\end{proof}

Note that when $k$ is an algebraically closed field of $\operatorname{char}(k)=2$, the field $F=k(\!(\alpha_1)\!)\dots(\!(\alpha_{n+1})\!)$ satisfies the conditions of Theorems \ref{quadratic} and \ref{bilinear}.

\section*{Acknowledgements}

The author thanks Jean-Pierre Tignol for the helpful discussions, and the anonymous referee for the constructive suggestions.
\bibliographystyle{abbrv}

\end{document}